\newtheorem{thm}{Theorem}
\newtheorem{lem}[thm]{Lemma}
\theoremstyle{remark}
\newtheorem{defn}[thm]{Definition}
\newtheorem{rem}[thm]{Remark}
\newtheorem*{notn}{Notation}
\numberwithin{thm}{section} \numberwithin{equation}{section}
\newcommand{\simon}[1]{\todo[color=green]{SR: #1}}
\newcommand{\urban}[1]{\todo[color=blue!40]{UL: #1}}
\newcommand{\mcG}{\mathcal{G}}
\newcommand{\mcN}{\mathcal{N}}
\newcommand{\mcP}{\mathcal{P}}
\DeclareMathOperator{\mex}{mex}
\title{Grundy values of Fibonacci nim}
\author{Urban Larsson}
\address{Department of Mathematics and Statistics, Dalhousie University, 6316 Coburg Road, PO Box 15000, Halifax, Nova Scotia, Canada B3H 4R2}
\email{urban031@gmail.com}
\author{Simon Rubinstein-Salzedo}
\address{Department of Statistics, 390 Serra Mall, Stanford University, Stanford, CA 94305, USA}
\email{simonr@stanford.edu}
\date{\today}
\begin{document}
\maketitle

\begin{abstract}
In this article, we investigate the Grundy values of the popular game of Fibonacci nim. The winning strategy, which amounts to understanding positions of Grundy value 0, was known since~\cite{Whinihan63}. In this paper, we extend Whinihan's analysis by computing all the positions of Grundy value at most 3. Furthermore, we show that, when we delete the Fibonacci numbers (which have Grundy value 0), the Grundy values of the starting positions are increasing, and we give upper and lower bounds on the growth rate.
\end{abstract}

\section{Introduction}
Fibonacci Nim, described and analyzed in~\cite{Whinihan63}, is a 2-player combinatorial game, popular due to its simple game rules and its elegant solution. Its analysis involves not only the Fibonacci numbers, but also the Zeckendorf representation of a natural number. It is played on one heap of tokens and the rules are the same for both players; thus the game is impartial (see~\cite{BCG01}). 

The rules of the game are as follows. Suppose that there are originally $n$ tokens in the heap. On the first move, the first player can remove between 1 and $n-1$ tokens. If, on the previous move, the last player removed $r$ tokens, then the next player can remove up to $2r$ tokens. The game ends when there are no moves left; the player left without a move loses.

Many impartial games are studied under the disjunctive sum operator; that is, two games $G$ and $H$ are played together, with a move in their sum $G+H$ being either a move in $G$ or a move in $H$, but not both. Sums of games are highly amenable to analysis, due to the Sprague-Grundy theory \cite{Sprague35,Grundy39}, which we review in \S\ref{sec:sprague-grundy}. Fibonacci Nim, however, is a so-called move-size dynamic game, where the current player's move options depend on the particular removal by the previous player, and so the possible moves of the game depend not only on the position but also on the game history. There are two logical ways of summing games of Fibonacci nim, or equivalently, playing Fibonacci nim with several heaps, based on where the move dynamic lives: is the move dynamic global, or is it local, specific to each heap?

In this article, we consider the move dynamic to be local, so there is a separate move dynamic assigned to each heap, and a move in one heap does not change the move dynamic in any other heap. The reason for this is that this rule fits in properly with the Sprague-Grundy theory, as it is simply the disjunctive sum operator. This allows us to analyze the game assuming we can analyze each heap separately.

In order to analyze each heap, it is necessary to compute Grundy values of single-heap positions. We consider a position to be a pair $(n,r)$, where $n$ is the total number of stones in the heap, and $r$ is the maximum number that may be removed on the next turn. The starting position is therefore $(n,n-1)$. We sometimes simply write $n$ to denote $(n,n)$.

In Table~\ref{table:gvals}, we display the Grundy values of the pairs $(n,r)$ for small values of $n$ and $r$. We write $\mcG(n,r)$ for the Grundy value of the pair $(n,r)$.

\begin{table} \begin{tabular}{c||c|cccc|cccc|cccc|cccc|cccc} $n\backslash r$ & 0 & 1 & 2 & 3 & 4 & 5 & 6 & 7 & 8 & 9 & 10 & 11 & 12 & 13 & 14 & 15 & 16 & 17 & 18 & 19 & 20 \\ \hline 0 & 0 \\ 1 & 0 & 1 \\ 2 & 0 & 0 & 2 \\ 3 & 0 & 0 & 0 & 3 \\ 4 & 0 & 1 & 1 & 3 & 3 \\ 5 & 0 & 0 & 0 & 0 & 0 & 4 \\ 6 & 0 & 1 & 1 & 1 & 1 & 4 & 4 \\ 7 & 0 & 0 & 2 & 2 & 2 & 4 & 4 & 4 \\ 8 & 0 & 0 & 0 & 0 & 0 & 0 & 0 & 0 & 5 \\ 9 & 0 & 1 & 1 & 1 & 1 & 1 & 1 & 1 & 5 & 5 \\ 10 & 0 & 0 & 2 & 2 & 2 & 2 & 2 & 2 & 5 & 5 & 5 \\ 11 & 0 & 0 & 0 & 3 & 3 & 3 & 3 & 5 & 5 & 5 & 5 & 5 \\ 12 & 0 & 1 & 1 & 3 & 3 & 3 & 3 & 3 & 6 & 6 & 6 & 6 & 6  \\ 13 & 0 & 0 & 0 & 0 & 0 & 0 & 0 & 0 & 0 & 0 & 0 & 0 & 0 & 6 \\ 14 & 0 & 1 & 1 & 1 & 1 & 1 & 1 & 1 & 1 & 1 & 1 & 1 & 1 & 6 & 6 \\ 15 & 0 & 0 & 2 & 2 & 2 & 2 & 2 & 2 & 2 & 2 & 2 & 2 & 2 & 6 & 6 & 6 \\ 16 & 0 & 0 & 0 & 3 & 3 & 3 & 3 & 3 & 3 & 3 & 3 & 3 & 7 & 7 & 7 & 7 & 7 \\ 17 & 0 & 1 & 1 & 3 & 3 & 3 & 3 & 3 & 3 & 3 & 3 & 3 & 3 & 7 & 7 & 7 & 7 & 7 \\ 18 & 0 & 0 & 0 & 0 & 0 & 4 & 4 & 4 & 4 & 4 & 4 & 7 & 7 & 7 & 7 & 7 & 7 & 7 & 7 \\ 19 & 0 & 1 & 1 & 1 & 1 & 4 & 4 & 4 & 4 & 4 & 4 & 4 & 7 & 7 & 7 & 7 & 7 & 7 & 7 & 7 \\ 20 & 0 & 0 & 2 & 2 & 2 & 4 & 4 & 4 & 4 & 4 & 4 & 4 & 4 & 7 & 7 & 7 & 7 & 7 & 7 & 7 & 7  \end{tabular} \caption{Grundy values for Fibonacci nim} \label{table:gvals} \end{table}

The structure of the rest of the paper is as follows. In \S\ref{sec:sprague-grundy}, we review the Sprague-Grundy theory. In \S\ref{sec:playing}, we review Zeckendorf's theorem and the winning strategy for Fibonacci nim. In \S\ref{sec:smallvals}, we give a complete description of the positions $(n,r)$ with $\mcG(n,r)\le 3$. In \S\ref{sec:startingvals}, we show that the nonzero Grundy values of the starting positions are increasing and provide upper and lower bounds for their sizes.

\section*{Acknowledgements}

This project was started at the TRUe Games workshop at Thompson Rivers University in Kamloops, British Columbia, in May 2014. The conference was sponsored by the Pacific Institute for the Mathematical Sciences. The authors would like to thank the anonymous referees for their helpful suggestions.

\section{The Sprague-Grundy theory} \label{sec:sprague-grundy}

When analyzing an impartial two-player game in isolation, it is sufficient to identify the $\mcN$ positions, which are winning for the next player, and the $\mcP$ positions, which are winning for the previous player (or, equivalently, losing for the next player). These positions can be classified recursively, as follows:

\begin{itemize}

\item A position is an $\mcN$ position if there is at least one move to a $\mcP$ position.

\item A position is a $\mcP$ position if every move is to an $\mcN$ position.

\end{itemize}

It is possible to analyze a sum of several games by understanding each game individually, but it is necessary to know more detailed information than just whether it is an $\mcN$ or $\mcP$ position. The key is the minimal excludant (mex) function.

\begin{defn} Let $S$ denote a finite set of nonnegative integers. Then the minimal excludant $\mex(S)$ is the least nonnegative integer not in $S$. \end{defn}

The Sprague-Grundy theory assigns a nonnegative integer $\mcG(X)$, known as the Grundy value of $X$, to each finite impartial game $X$ recursively, by letting $\mcG(X)=\mex(\{\mcG(Y)\})$, where $Y$ runs over all the moves from $X$.

If $X$ decomposes as a sum of several games, say $X=X_1+\cdots+X_n$, then $\mcG(X)=\mcG(X_1)\oplus\cdots\oplus\mcG(X_n)$, where the operator $\oplus$ is ``add in binary without carrying.'' (See e.g.~\cite{BCG01} for more details.) An impartial game $X$ is a $\mcP$ position iff $\mcG(X)=0$.

\section{Playing Fibonacci nim} \label{sec:playing}

The essential ingredient to winning at Fibonacci nim is Zeckendorf's Theorem.

\begin{thm}[Zeckendorf, \cite{Zeckendorf72}] Every positive integer has a unique representation as a sum of distinct Fibonacci numbers, no two of which are consecutive. \end{thm}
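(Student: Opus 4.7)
The plan is to prove existence and uniqueness separately, both by (strong) induction on $n$, and both relying on a single auxiliary bound: any sum of distinct, pairwise non-consecutive Fibonacci numbers whose largest term is $F_j$ is strictly less than $F_{j+1}$.

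For existence, I would use the greedy algorithm. Given $n\ge 1$, let $F_k$ be the largest Fibonacci number with $F_k\le n$. The remainder $n-F_k$ is non-negative and, because $F_{k+1}>n$ together with $F_{k+1}=F_k+F_{k-1}$ gives $n-F_k<F_{k-1}$, strictly less than $F_{k-1}$. By the strong induction hypothesis, $n-F_k$ has a Zeckendorf representation, and every summand in that representation is at most $F_{k-2}$ (since no summand can be $F_{k-1}$ or larger). Appending $F_k$ therefore produces a representation of $n$ as a sum of distinct Fibonacci numbers no two of which are consecutive.

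For uniqueness, the key is the auxiliary bound above, which I would prove by induction on $j$ using the identities $F_2+F_4+\cdots+F_{2m}=F_{2m+1}-1$ and $F_1+F_3+\cdots+F_{2m+1}=F_{2m+2}-1$ (each verifiable by a one-line telescoping argument from the Fibonacci recursion). Given two Zeckendorf representations $n=\sum_{i\in S}F_i=\sum_{i\in T}F_i$, the bound forces $\max S=\max T$: otherwise, if say $\max S<\max T$, then $n<F_{\max S+1}\le F_{\max T}\le n$, a contradiction. Subtracting the common largest summand and invoking the induction hypothesis on the smaller integer $n-F_{\max S}$ yields $S=T$.

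The only real obstacle is purely bookkeeping: fixing a convention for the Fibonacci indices (typically starting the admissible summands at $F_2=1$ to avoid the $F_1=F_2$ ambiguity) and checking the base cases $n=1,2$ by hand. Once that convention is fixed, both inductions are routine and the auxiliary bound does all the combinatorial work.
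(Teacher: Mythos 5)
The paper does not prove this statement at all: it is quoted as a known result and attributed to Zeckendorf's paper, so there is no internal proof to compare against. Your proposal is the standard, correct argument --- greedy choice of the largest Fibonacci number $F_k\le n$ for existence (with the observation $n-F_k<F_{k-1}$ guaranteeing non-consecutiveness), and for uniqueness the bound that a non-consecutive sum with top term $F_j$ is strictly below $F_{j+1}$, forcing the largest summands of two representations to agree. One small slip: the identity you quote for odd indices should read $F_1+F_3+\cdots+F_{2m+1}=F_{2m+2}$ (not $F_{2m+2}-1$); the value $F_{2m+2}-1$ is what you get for $F_3+F_5+\cdots+F_{2m+1}$, i.e.\ after adopting exactly the convention you yourself propose, that admissible summands start at $F_2$. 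This is also why that convention is not mere bookkeeping: if $F_1$ were allowed, the maximal odd-index sum would equal $F_{2m+2}$ and the strict inequality driving uniqueness would fail. With the convention fixed (note the paper indexes $F_0=0$, $F_1=1$, so $F_1=F_2=1$ is the ambiguity being dodged), both inductions go through as you describe.
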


We call this representation the Zeckendorf representation of $n$. We write $z_i(n)$ for the $i^\text{th}$ smallest part in the Zeckendorf representation of $n$; if the Zeckendorf representation of $n$ contains fewer than $i$ parts, then we write $z_i(n)=\infty$. We also write expressions of the form $n=a+b+c+\cdots$, meaning that $z_1(n)=a$, $z_2(n)=b$, $z_3(n)=c$, and $c<\infty$.

Now, assume that $(n,r)$ is an $\mcN$ position. As we shall prove in Theorem~\ref{thm:zeroes}, this is true if and only if $r\ge z_1(n)$. A winning move is to remove $z_1(n)$ tokens. (There may be other winning moves as well.)

Because the winning strategy of Fibonacci nim is so closely tied to Zeckendorf's Theorem, we can view the entire game as a game-theoretic interpretation of Zeckendorf's Theorem.

\section{Small Grundy Values} \label{sec:smallvals}

\begin{notn} We write $F_t$ for the $t^\text{th}$ Fibonacci number. As usual, we index the Fibonacci numbers so that $F_0=0$ and $F_1=1$. \end{notn}

We show the following:

\begin{thm} \label{thm:zeroes} $\mcG(n,r)=0$ if and only if $r<z_1(n)$. \end{thm}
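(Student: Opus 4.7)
The plan is to prove the theorem by strong induction on $n$. The base case $n=0$ is immediate: no moves are available, so $\mcG(0,r)=0$, and with the natural convention $z_1(0)=\infty$ the condition $r<z_1(0)$ holds trivially. For the inductive step, I write the Zeckendorf representation $n=F_{i_1}+F_{i_2}+\cdots+F_{i_k}$ with $i_{j+1}\ge i_j+2$ and $i_1\ge 2$, so $z_1(n)=F_{i_1}$, and split into the two directions.

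For the ``if'' direction, assume $r\ge F_{i_1}$ and consider removing $F_{i_1}$ tokens, landing at $(n-F_{i_1},2F_{i_1})$. By the inductive hypothesis, this child has Grundy value $0$ iff $2F_{i_1}<z_1(n-F_{i_1})$. Since $n-F_{i_1}=F_{i_2}+\cdots+F_{i_k}$, its smallest Zeckendorf part is $F_{i_2}$ (or $\infty$ if $k=1$), and $F_{i_2}\ge F_{i_1+2}=F_{i_1+1}+F_{i_1}>2F_{i_1}$ because $F_{i_1+1}>F_{i_1}$ for $i_1\ge 2$. So this move lands on a Grundy-$0$ position, witnessing $\mcG(n,r)>0$.

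For the ``only if'' direction, assume $r<F_{i_1}$; I need every child $(n-s,2s)$ with $1\le s\le r$ to have nonzero Grundy value, which by induction reduces to showing $z_1(n-s)\le 2s$ for all $0<s<F_{i_1}$. Writing $n-s=(F_{i_1}-s)+(F_{i_2}+\cdots+F_{i_k})$, the largest Zeckendorf part of $F_{i_1}-s$ is at most $F_{i_1-1}$ (since $0<F_{i_1}-s<F_{i_1}$), and this is non-consecutive with $F_{i_2}\ge F_{i_1+2}$, so the two expansions concatenate into a valid Zeckendorf expansion of $n-s$, giving $z_1(n-s)=z_1(F_{i_1}-s)$. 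The entire theorem thus reduces to the following key lemma: \emph{for any $k\ge 3$ and $0<s<F_k$, $z_1(F_k-s)\le 2s$.}

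I would prove the key lemma by a separate induction on $k$, pivoting $s$ at $F_{k-2}$ so that each case reduces via $F_k=F_{k-1}+F_{k-2}$. If $s\le F_{k-2}$, write $F_k-s=F_{k-1}+(F_{k-2}-s)$: when $s=F_{k-2}$ the result is $F_{k-1}$ with $z_1=F_{k-1}\le 2F_{k-2}$; when $s<F_{k-2}$ the Zeckendorf of $F_{k-2}-s$ has largest part $\le F_{k-3}$, so appending $F_{k-1}$ preserves the Zeckendorf property, and by induction $z_1(F_k-s)=z_1(F_{k-2}-s)\le 2s$. If $s>F_{k-2}$, set $s'=s-F_{k-2}\in(0,F_{k-1})$, so $F_k-s=F_{k-1}-s'$ and by induction $z_1(F_{k-1}-s')\le 2s'\le 2s$. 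The main obstacle I anticipate is isolating this lemma and finding the correct pivot; once the split at $F_{k-2}$ is chosen, the remaining work is routine bookkeeping on Zeckendorf expansions to check that the concatenations never produce consecutive Fibonacci indices.
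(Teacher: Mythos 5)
Your proof is correct, and its outer shell matches the paper's: induct on $n$, show that removing $z_1(n)$ tokens reaches a position with $2z_1(n)<z_2(n)$, and reduce the converse direction to the inequality $z_1(n-s)\le 2s$ for all $0<s<z_1(n)$ (using, as the paper also does, that the Zeckendorf expansion of $n-s$ is the concatenation of that of $z_1(n)-s$ with the remaining parts of $n$). Where you genuinely diverge is in the key lemma and its proof. The paper's Lemma~\ref{lem:smallfibs} proves the sharper statement that if $z_1(s)=F_t$ then $z_1(n-s)\in\{F_{t-1},F_{t+1}\}$, by induction on the number of Zeckendorf parts of $s$, with the base case handled by the telescoping identities $F_m-F_{m-2d}=F_{m-2d+1}+F_{m-2d+3}+\cdots+F_{m-1}$. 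You instead prove only the inequality $z_1(F_m-s)\le 2s$, by induction on the index $m$ with a pivot at $s=F_{m-2}$ and the recursion $F_m=F_{m-1}+F_{m-2}$. Your version is arguably more elementary (no need to track which Fibonacci number $z_1(n-s)$ actually is) and is fully sufficient for Theorem~\ref{thm:zeroes}; the trade-off is that it is strictly weaker than Lemma~\ref{lem:smallfibs}, whose refinement $z_1(n-k)\le 2k-2$ for $k\ge 4$ is needed later in the proof of Theorem~\ref{thm:threes}, so the paper's formulation earns its extra precision downstream. One small bookkeeping point: your induction on $m$ needs the base cases $m=3,4$ checked directly (they are trivial, and the subcase $0<s<F_{m-2}$ that invokes the hypothesis at $m-2$ is vacuous there), which you should state explicitly.
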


\begin{rem} An important special case of Theorem~\ref{thm:zeroes} is that the starting position $(n,n-1)$ with $n$ stones is losing iff $n$ is a Fibonacci number. \end{rem}

This is a classical result, due to Whinihan in~\cite{Whinihan63}. However, its proof will be useful for the rest of our results, so we review it here. We will make use of the following Lemma:

\begin{lem} \label{lem:smallfibs} Suppose $n>1$ and $1\le k<z_1(n)$. If $z_1(k)=F_t$, then $z_1(n-k)$ is either $F_{t+1}$ or $F_{t-1}$. In particular, $z_1(n-k) \le 2k$, and if $k\ge 4$, then $z_1(n-k)\le 2k-2$. \end{lem}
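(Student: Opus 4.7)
My plan is to reduce the claim to an analysis of $z_1(F_s-k)$, where $F_s=z_1(n)$, and then induct on the number of Zeckendorf parts of $k$. Write $n=F_s+n'$, where $n'$ is either $0$ or has Zeckendorf representation using indices $\ge s+2$. Since $k<F_s$, the quantity $F_s-k$ is positive and has Zeckendorf representation using only indices $\le s-1$. The gap to $n'$ is at least three indices, so concatenating the two representations yields a valid Zeckendorf representation of $n-k$, which shows $z_1(n-k)=z_1(F_s-k)$. It therefore suffices to prove $z_1(F_s-k)\in\{F_{t-1},F_{t+1}\}$ (with the convention $F_1=F_2=1$ when $t=2$).

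For the single-term base case $k=F_t$, I would use the telescoping identity
$$F_s-F_t=\begin{cases} F_{s-1}+F_{s-3}+\cdots+F_{t+1}, & s-t\text{ even},\\ F_{s-1}+F_{s-3}+\cdots+F_{t+2}+F_{t-1}, & s-t\text{ odd},\end{cases}$$
which follows from a short induction on $s-t$ using $F_j=F_{j-1}+F_{j-2}$. Each right-hand side has no consecutive indices, so it is the Zeckendorf representation of $F_s-F_t$, and $z_1(F_s-F_t)$ equals $F_{t+1}$ or $F_{t-1}$ according to the parity of $s-t$.

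For the inductive step, suppose $k=F_t+k'$ with $k'\ge F_{t+2}$, and let $F_{t'}=z_1(k')$, so that $t'\ge t+2$. Since $k'<k<F_s$ and $k'$ has strictly fewer Zeckendorf parts than $k$, the inductive hypothesis applied to $(n,k')$ yields $z_1(n-k')\in\{F_{t'-1},F_{t'+1}\}$; both candidates are $\ge F_{t+1}>F_t$. Setting $m=n-k'$, the hypothesis $F_t<z_1(m)$ is then in hand, and the single-term base case applied to $(m,F_t)$ gives $z_1(m-F_t)\in\{F_{t-1},F_{t+1}\}$. Since $m-F_t=n-k$, this completes the recursion.

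For the numerical bounds, $z_1(n-k)\le F_{t+1}=F_t+F_{t-1}\le 2F_t\le 2k$ is immediate. For $k\ge 4$, I split into cases: if $k=F_t$ is a single Fibonacci, then $k\ge 5$, so $t\ge 5$ and $F_{t-2}\ge 2$, giving $F_{t+1}=2F_t-F_{t-2}\le 2k-2$; if $k$ has more than one Zeckendorf part, then $k\ge F_t+F_{t+2}>F_{t+1}$, so $F_{t+1}\le k-1\le 2k-2$. I expect the main obstacle to be the inductive step, where one must confirm that the Zeckendorf gap $t'\ge t+2$ is precisely what ensures $z_1(m)>F_t$, so that the base case can be reapplied without losing the hypothesis $k<z_1(n)$.
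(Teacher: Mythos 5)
Your proposal is correct and follows essentially the same route as the paper's proof: induction on the number of Zeckendorf parts of $k$, with the base case $k=F_t$ handled by the telescoping expansion of $F_s-F_t$ (parity of $s-t$ deciding between $F_{t+1}$ and $F_{t-1}$), the reduction $z_1(n-k)=z_1(F_s-k)$ via concatenation of representations, and the inductive step peeling off $z_1(k)$ and reapplying the base case to $m=n-k'$. The only difference is cosmetic: you spell out the ``in particular'' bounds $z_1(n-k)\le 2k$ and $z_1(n-k)\le 2k-2$ for $k\ge 4$, which the paper leaves implicit.
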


\begin{rem} \label{rem:clauses} We primarily use the clause that $z_1(n-k)\le 2k$. However, at one point in the proof of Theorem~\ref{thm:threes}, we will need the stronger clause that $z_1(n-k)\le 2k-2$ if $k\ge 4$. \end{rem}

\begin{proof} We prove this by induction on the number of parts in the Zeckendorf representation of $k$. We start with the case of $k$ being a Fibonacci number, so that $z_1(k)=k$. Suppose that $z_1(n)=F_s$. We divide the proof into two cases: $s\equiv t\pmod 2$ and $s\not\equiv t\pmod 2$. If $s\equiv t\pmod 2$, then we have $t=s-2d$ for some $d\ge 1$, and we have \[F_s-k=F_s-F_{s-2d}=F_{s-2d+1}+F_{s-2d+3}+\cdots+F_{s-3}+F_{s-1},\] so \[z_1(F_s-k)=F_{s-2d+1}=F_{t+1}.\] Now, note that the Zeckendorf representation of $n-k$ is equal to the union of the Zeckendorf representation of $F_s-k$ and the Zeckendorf representation of $n$ with the first part (that is, $F_s$) removed. So, the result holds in this case.

Now, suppose that $s\not\equiv t\pmod 2$. Then $t=s-2d-1$ for some $d\ge 0$, and we have \[F_s-F_t=F_s-F_{s-2d-1}=F_{s-2d-2}+F_{s-2d}+\cdots+F_{s-3}+F_{s-1},\] so $z_1(F_s-k)=F_{s-2d-2}=F_{t-1}$. As before, we have $z_1(n-k)=z_1(F_s-k)$, so here too the result holds.

Now suppose that the result holds whenever the Zeckendorf representation of $k$ has $p-1$ parts. Suppose furthermore that the Zeckendorf representation of $k$ has $p$ parts. Then, since $k-z_1(k)$ has $p-1$ parts, we know that if $z_1(k)=F_t$, then $z_1(k-z_1(k))\ge F_{t+2}$, so $z_1(n-k+z_1(k))\ge F_{t+1}>F_t=z_1(k)$. Hence, by the base case above with $n-k+z_1(k)$ and $z_1(k)$, respectively, playing the parts of $n$ and $k$, $z_1(n-k)$ is either $F_{t-1}$ or $F_{t+1}$. \end{proof}

\begin{proof}[Proof of Theorem~\ref{thm:zeroes}] The proof of this theorem, and the others in this section, are all by induction on $n$. It suffices to show that, from any position with $r\ge z_1(n)$, there is some $k$ with $k\le r$ so that $2k<z_1(n-k)$ (in fact, $k=z_1(n)$ works), and that if $r<z_1(n)$, then for every $k\le r$, $2k\ge z_1(n-k)$. In the language of $\mcN$ and $\mcP$ positions, this says that for every $\mcN$ position ($r\ge z_1(n)$), there is a move to a $\mcP$ position ($r<z_1(n)$), and for every $\mcP$ position, all moves are to $\mcN$ positions.

Assume that $r\ge z_1(n)$. We show that $2z_1(n)<z_2(n)=z_1(n-z_1(n))$. Since $z_1(n)$ is a Fibonacci number, say $F_t$ with $t\ge 2$, and $z_2(n)$ is also a Fibonacci number at least $F_{t+2}$, we have \[z_2(n)\ge F_{t+2}=F_{t+1}+F_t>2F_t,\] as desired. Hence, $k=z_1(n)$ satisfies the condition in the previous paragraph.

Now assume that $k<z_1(n)$. By Lemma~\ref{lem:smallfibs}, if $z_1(k)=F_t$, then $z_1(n-k)\le F_{t+1}$. Since $F_{t+1}\le 2F_t\le 2k$, we have $2k\ge z_1(n-k)$, as desired. \end{proof}

\begin{thm} \label{thm:ones} $\mcG(n,r)=1$ iff $z_1(n)=1$ and $1\le r<z_2(n)$. \end{thm}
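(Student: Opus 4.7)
The plan is to proceed by strong induction on $n$, using that $\mcG(n, r) = 1$ iff the set of Grundy values reachable from $(n, r)$ contains $0$ but not $1$.

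For the forward direction, assume $z_1(n) = 1$ and $1 \le r < z_2(n)$. The move $k = 1$ reaches $(n - 1, 2)$, which has Grundy $0$ by Theorem~\ref{thm:zeroes}: Zeckendorf's non-consecutive condition together with $z_1(n) = F_2$ forces $z_1(n - 1) = z_2(n) \ge F_4 = 3 > 2$. To rule out moves to Grundy $1$, suppose some $1 \le k \le r$ satisfied $\mcG(n - k, 2k) = 1$; induction would then demand both $z_1(n - k) = 1$ and $z_2(n - k) > 2k$. The case $k = 1$ is immediate ($z_1(n - 1) = z_2(n) \ge 3 \ne 1$). For $k \ge 2$, I would apply Lemma~\ref{lem:smallfibs} to $m := n - 1$ and $k$---valid since $z_1(m) = z_2(n) > r \ge k$---to obtain $z_1(m - k) \le 2k$. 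Since $z_1(n - k) = 1$ identifies $z_2(n - k)$ with $z_1((n - k) - 1) = z_1(m - k)$, this contradicts $z_2(n - k) > 2k$.

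For the backward direction, assume $\mcG(n, r) = 1$; Theorem~\ref{thm:zeroes} yields $r \ge z_1(n) \ge 1$. I would rule out $z_1(n) \ge 2$ and $r \ge z_2(n)$ separately by exhibiting, in each case, a move from $(n, r)$ to a Grundy-$1$ position. If $z_1(n) = F_t$ with $t \ge 3$, take $k = F_t - 1 \in [1, r]$; since $z_2(n) \ge F_{t+2} > F_3$, the Zeckendorf representation of $n - k$ is $1, z_2(n), z_3(n), \ldots$, giving $z_1(n - k) = 1$ and $z_2(n - k) \ge F_{t+2} > 2(F_t - 1) = 2k$, so induction yields $\mcG(n - k, 2k) = 1$. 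If instead $z_1(n) = 1$ and $r \ge z_2(n) = F_s$ (necessarily $s \ge 4$), take $k = F_s$; the representation of $n - k$ becomes $1, z_3(n), z_4(n), \ldots$ with $z_2(n - k) \ge F_{s+2} > 2F_s = 2k$, and induction again forces Grundy $1$.

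The main obstacle is the ``no move to Grundy $1$'' step in the forward direction: the crucial observation is that the inductively required hypothesis $z_1(n - k) = 1$ itself lets us identify $z_2(n - k)$ with $z_1(m - k)$, which is precisely the quantity bounded by Lemma~\ref{lem:smallfibs}. The backward direction is more computational but only requires spotting the right witness moves ($k = z_1(n) - 1$ and $k = z_2(n)$), both of which are natural given the Zeckendorf structure.
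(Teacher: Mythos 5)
Your proposal is correct and follows essentially the same route as the paper's proof: the same witness moves ($k=1$ to reach a Grundy-$0$ position, and $k=z_1(n)-1$ resp.\ $k=z_2(n)$ to exhibit Grundy-$1$ positions in the converse direction), and the same key step of applying Lemma~\ref{lem:smallfibs} with $n-1$ in place of $n$ to rule out moves to Grundy-$1$ positions via the identity $z_2(n-k)=z_1(n-1-k)$. No substantive differences.
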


\begin{proof} In order for $\mcG(n,r)$ to be 1, there must be some $k$ with $1\le k\le r$ so that $\mcG(n-k,2k)=0$, and furthermore, $\mcG(n-k,2k)\neq 1$ for all $k$ with $1\le k\le r$. Suppose $z_1(n)=1$ and $r<z_2(n)$. Then $\mcG(n-1,2)=0$ by Theorem~\ref{thm:zeroes}, since $z_1(n-1)=z_2(n)\ge 3$, as otherwise the Zeckendorf representation of $n$ would have two consecutive Fibonacci numbers, which is impossible. We now show that, for each $k<z_2(n)=z_1(n-1)$, $\mcG(n-k,2k)\neq 1$. It suffices to show that either $z_1(n-k)\neq 1$ or $2k\ge z_2(n-k)$. This follows from applying Lemma~\ref{lem:smallfibs} with $n-1$ in place of $n$, since if $z_1(n-k)=1$, then $z_2(n-k)=z_1(n-1-k)$.

Now, suppose that $z_1(n)>1$. If $r<z_1(n)$, then by Theorem~\ref{thm:zeroes}, $\mcG(n,r)=0$. If $r\ge z_1(n)$, then there is a move to $(n-z_1(n)+1,2z_1(n)-2)$. Now, $z_1(n-z_1(n)+1)=1$, and $2z_1(n)-2\le z_2(n-z_1(n)+1)=z_2(n)$. Hence $\mcG(n-z_1(n)+1,2z_1(n)-2)=1$, so $\mcG(n,r)\neq 1$.

Finally, suppose that $z_1(n)=1$ and $r\ge z_2(n)$. Then $\mcG(n-z_2(n),2z_2(n))=1$, since $2z_2(n)<z_3(n)=z_2(n-z_2(n))$. Thus, in this case, there is a move to a position with Grundy value 1, so $\mcG(n,r)\neq 1$. \end{proof}

\begin{thm} \label{thm:twos} $\mcG(n,r)=2$ iff $z_1(n)=2$ and $2\le r<z_2(n)$. \end{thm}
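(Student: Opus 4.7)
The plan is to mirror the structure of the proof of Theorem~\ref{thm:ones}: proceed by induction on $n$, and analyze the Grundy values reachable from $(n,r)$.

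For the forward direction, assume $z_1(n)=2$ and $2\le r<z_2(n)$. The move $k=2$ reaches $(n-2,4)$ with $z_1(n-2)=z_2(n)\ge F_5=5>4$, giving Grundy $0$ by Theorem~\ref{thm:zeroes}. The move $k=1$ reaches $(n-1,2)$ with $z_1(n-1)=1$ and $z_2(n-1)=z_2(n)>2$, giving Grundy $1$ by Theorem~\ref{thm:ones}. It remains to rule out any $k\in\{1,\dots,r\}$ reaching Grundy $2$; by the inductive hypothesis this means showing either $z_1(n-k)\ne 2$ or $z_2(n-k)\le 2k$. The key identity is that when $z_1(n-k)=2$ we have $z_2(n-k)=z_1(n-k-2)=z_1((n-2)-k)$, so the required bound follows from Lemma~\ref{lem:smallfibs} applied to $n-2$ and $k$, using $k\le r<z_2(n)=z_1(n-2)$.

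For the reverse direction, I would argue the contrapositive. If $r<z_1(n)$, or if $z_1(n)=2$ with $r\le 1$, or if $z_1(n)=1$ with $r<z_2(n)$, then $\mcG(n,r)\in\{0,1\}$ by Theorems~\ref{thm:zeroes} and~\ref{thm:ones}. In each of the remaining cases I exhibit a move to a Grundy $2$ position, contradicting $\mcG(n,r)=2$: if $z_1(n)\ge 3$ and $r\ge z_1(n)$, take $k=z_1(n)-2$; if $z_1(n)=1$ and $r\ge z_2(n)$, take $k=z_2(n)-1$; if $z_1(n)=2$ and $r\ge z_2(n)$, take $k=z_2(n)$. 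In each case the target has $z_1$-part $2$ followed by $z_2(n)$ or $z_3(n)$ as the next Zeckendorf part, and that next part exceeds $2k$ by the standard estimate $F_{s+2}>2F_s$, so by the inductive hypothesis the target has Grundy value $2$.

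The main obstacle is bookkeeping rather than any new idea: one must confirm the arguments remain valid at the boundaries where $z_2(n)=\infty$ (when $n$ is a Fibonacci number), $z_3(n)=\infty$ (when $n$ has only two Zeckendorf parts), or where the move leaves fewer than two tokens, so that the identity $z_2(n-k)=z_1((n-2)-k)$ and the appeal to Lemma~\ref{lem:smallfibs} are both legitimate. Spot-checks against Table~\ref{table:gvals}, such as $\mcG(2,2)=2$, $\mcG(7,5)\ne 2$, and $\mcG(10,8)\ne 2$, should catch any slip in the case analysis.
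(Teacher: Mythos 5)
Your proposal is correct and follows essentially the same route as the paper's proof: the same moves $k=1$ and $k=2$ to witness the values $1$ and $0$, the same identity $z_2(n-k)=z_1((n-2)-k)$ combined with Lemma~\ref{lem:smallfibs} (using $k\le r<z_2(n)=z_1(n-2)$) to exclude a move to value $2$, and the same witnesses $k=z_2(n)-1$ and $k=z_2(n)$ in the converse. Your explicit treatment of the case $z_1(n)\ge 3$, $r\ge z_1(n)$ via $k=z_1(n)-2$ is in fact a subcase the paper's write-up passes over silently, so spelling it out is a small improvement rather than a divergence.
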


\begin{proof} In order for $\mcG(n,r)$ to be 2, there must be moves to positions of values 0 and 1, and no move to a position of value 2. We now show that if $z_1(n)=2$ and $2\le r<z_2(n)$, then $\mcG(n,r)=2$. Since $r\ge 2$ and $z_1(n)=2$, $\mcG(n-2,4)=0$, so there is a move to a 0-position, since $z_1(n-2)=z_2(n)\ge 5$. Furthermore, $\mcG(n-1,2)=1$, so there is a move to a 1-position. Now, suppose that $\mcG(n-k,2k)=2$ for some $k\le r$. Then, by induction, we would have $z_1(n-k)=2$ and $2k<z_2(n-k)$. But if $z_1(n-k)=2$, then $z_2(n-k)=z_1(n-k-2)$, which, since $k\le r<z_2(n)=z_1(n-2)$, is $\le 2k$ by Lemma~\ref{lem:smallfibs}, which contradicts the induction. Hence, there are no moves to positions of value 2. 

Now, suppose $z_1(n)\neq 2$. If $z_1(n)=1$ and $1\le r<z_2(n)$, then by Theorem~\ref{thm:ones}, $\mcG(n,r)=1$. Now, suppose $z_1(n)=1$ and $r\ge z_2(n)$. Then $\mcG(n-z_2(n)+1,2z_2(n)-2)=2$ by induction. Hence, in this case, $\mcG(n,r)\neq 2$.

Finally, suppose $z_1(n)=2$ and $r\ge z_2(n)$. Then $\mcG(n-z_2(n),2z_2(n))=2$, since $2z_2(n)<z_3(n)$. Hence, there is a move to a position with Grundy value 2, so $\mcG(n,r)\neq 2$. \end{proof}

\begin{thm} \label{thm:threes} $\mcG(n,r)=3$ iff $z_1(n)=1$, $z_2(n)=3$, and $3\le r<z_3(n)$, or $z_1(n)=3$ and $3\le r<z_2(n)-1$. \end{thm}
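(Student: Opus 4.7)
The plan is to follow the inductive template of Theorems~\ref{thm:zeroes}, \ref{thm:ones}, and \ref{thm:twos}, inducting on $n$ and verifying both implications of the characterization. Let us refer to the two cases of the theorem as Case A ($z_1(n)=1,z_2(n)=3,3\le r<z_3(n)$) and Case B ($z_1(n)=3,3\le r<z_2(n)-1$). For $(n,r)$ in either case we must exhibit moves to positions of Grundy values $0,1,2$ and rule out any move to a Grundy $3$ position. Conversely, for $(n,r)$ outside both cases with $\mcG(n,r)\notin\{0,1,2\}$ we must find a move landing in Case A or Case B, so that $\mcG(n,r)\ne 3$ by induction.

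Producing the moves to Grundy $0,1,2$ is direct: in Case B with $n=3+F_s+\cdots$ (where $F_s=z_2(n)\ge 8$), the moves $k=3,2,1$ yield $(n-3,6),(n-2,4),(n-1,2)$ whose Zeckendorf representations start with $F_s$, $1+F_s$, $2+F_s$, so Theorems~\ref{thm:zeroes}--\ref{thm:twos} give Grundy values $0,1,2$; Case A is analogous with $k=1,3,2$. To rule out moves to Grundy $3$, we show $(n-k,2k)$ cannot lie in Case A or Case B for any $1\le k\le r$. Small $k$ are handled by direct Zeckendorf computation; for larger $k$ we write $n-k=m-j$ where $m$ is the truncation of $n$ obtained by removing its smallest one or two Zeckendorf parts ($m=n-3$ in Case B, $m=n-4$ in Case A) and $j$ is the corresponding shift of $k$, so that Lemma~\ref{lem:smallfibs} applies to $m$ and $j$. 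In Case B for $n$, ruling out Case B for $(n-k,2k)$ uses only the weak bound $z_2(n-k)\le 2k$ to contradict the required $z_2(n-k)>2k+1$. However, ruling out Case A for $(n-k,2k)$ requires a sharper bound on $z_3(n-k)$, and the weak form of Lemma~\ref{lem:smallfibs} yields only $z_3(n-k)\le 2k+2$. This is the unique place where we must invoke the strong clause $z_1(m-j)\le 2j-2$ (valid for $j\ge 4$) of Lemma~\ref{lem:smallfibs} flagged in Remark~\ref{rem:clauses}, which yields $z_3(n-k)\le 2k$ and contradicts the Case A requirement $z_3(n-k)>2k$.

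For the backward direction, the $(n,r)$ with $\mcG\notin\{0,1,2\}$ split into five families: (i) $z_1(n)=1,z_2(n)=3,r\ge z_3(n)$, with $k=z_3(n)$ landing in Case A; (ii) $z_1(n)=3,r\ge z_2(n)-1$, with $k=z_2(n)-1$ in Case A (or $k=z_2(n)$ in Case B when $r\ge z_2(n)$); (iii) $z_1(n)=1,z_2(n)\ge 5,r\ge z_2(n)$, with $k=z_2(n)-3$ in Case A; (iv) $z_1(n)=2,r\ge z_2(n)$, with $k=z_2(n)-2$ in Case A; (v) $z_1(n)\ge 5,r\ge z_1(n)$, with $k=z_1(n)-3$ in Case B. Each of these moves effectively reshapes the Zeckendorf representation of $n$ to begin with $1+3$ or with $3$, and Fibonacci spacing $F_{t+2}>2F_t$ gives the requisite $2k<z_3$ or $2k<z_2-1$. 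Degenerate positions where the Zeckendorf of $n$ has too few parts reduce to small base cases $(3,r)$ or $(4,r)$ readily checked against Table~\ref{table:gvals}.

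The main obstacle is the Case B forward argument ruling out a move to Case A, which is the one place where the strengthened form of Lemma~\ref{lem:smallfibs} is essential. This also accounts for the asymmetric range $r<z_2(n)-1$ in Case B: were $r=z_2(n)-1$ admitted, the move $k=z_2(n)-1$ would transform the position into a Case A position of Grundy value $3$, forcing $\mcG(n,z_2(n)-1)\ne 3$ and so explaining the $-1$ correction in the upper bound.
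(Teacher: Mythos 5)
Your proposal is correct and follows essentially the same inductive strategy as the paper: exhibit moves to Grundy values $0,1,2$, rule out moves to Grundy-$3$ positions via Lemma~\ref{lem:smallfibs} (invoking the strong clause $z_1(m-j)\le 2j-2$ at exactly the same spot, namely ruling out a move from a Case B position to a Case A position), and handle the converse by producing explicit moves into Case A or Case B. The only divergence is cosmetic: in your families (iii) and (iv) you choose $k$ so as to land in Case A where the paper lands in Case B, but both choices verify the same Fibonacci spacing inequality.
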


\begin{proof} In order for $\mcG(n,r)$ to be 3, there must be moves to positions of values 0, 1, and 2, and no move to a position of value 3. We now show that if $z_1(n)=1$, $z_2(n)=3$, and $3\le r<z_3(n)$, then $\mcG(n,r)=3$. By Theorem~\ref{thm:zeroes}, $\mcG(n-1,2)=0$, so there is a move to 0. By Theorem~\ref{thm:ones}, $\mcG(n-3,6)=1$, since $n-3=1+z_3(n)+\cdots$ and $z_3(n)\ge 8$ by Zeckendorf's Theorem, since $z_2(n)=3$. By Theorem~\ref{thm:twos}, $\mcG(n-2,4)=2$, since $n-2=2+z_3(n)+\cdots$. Now, we show that there are no moves from $(n,r)$ to a position with Grundy value 3. Clearly, removing one or two tokens does not leave a position with Grundy value 3. If we were to leave a position with Grundy value 3 after removing $3\le k<z_3(n)$, then we must either have $z_1(n-k)=1$, $z_2(n-k)=3$, and $2k<z_3(n-k)$, or $z_1(n-k)=3$ and $2k<z_2(n-k)-1$. In the first case, we have $z_3(n-k)=z_1(n-k-4)$, and as $k<z_3(n)=z_1(n-4)$, Lemma~\ref{lem:smallfibs} implies that $z_3(n-k)\le 2k$, contradicting the hypothesis. In the second case, we have $z_2(n-k)=z_1(n-k-3)\le 2(k-1)$ by Lemma~\ref{lem:smallfibs}, contradicting the assumption that $z_2(n-k)>2k+1$. Hence, there is no move to a position with Grundy value 3.

Now suppose that $z_1(n)=3$ and $3\le r<z_2(n)-1$. Then $\mcG(n-3,6)=0$ since $6<z_1(n-3)=z_2(n)$. Now, $\mcG(n-2,4)=1$ since $n-2=1+z_2(n)+\cdots$ and $z_2(n)\ge 8$. Furthermore, $\mcG(n-1,2)=2$ since $n-1=2+z_2(n)+\cdots$. If there were a move to a position $(n-k,2k)$ of Grundy value 3, then we would either have $z_1(n-k)=1$, $z_2(n-k)=3$, and $2k<z_3(n-k)$, or $z_1(n-k)=3$ and $2k<z_2(n-k)-1$. Furthermore, if $k\le 3$, then we have already seen that $\mcG(n-k,2k)\neq 3$, so we may assume that $k\ge 4$, putting us in the final case of Lemma~\ref{lem:smallfibs}, as mentioned in Remark~\ref{rem:clauses}. In the first case, $z_3(n-k)=z_1(n-k-4)$, and as $k+1<z_2(n)=z_1(n-3)$, Lemma~\ref{lem:smallfibs} implies that $z_3(n-k)\le 2(k+1)-2=2k$, so by induction $\mcG(n-k,2k)\neq 3$. In the second case, $z_2(n-k)=z_1(n-k-3)$, and by Lemma~\ref{lem:smallfibs}, $z_1(n-k-3)\le 2k$, contradicting the hypothesis. Hence, once again there is no move to a position with Grundy value 3.

Now, we must show that for any $(n,r)$ not of the above two forms, $\mcG(n,r)\neq 3$. If $n=1+3+z_3(n)+\cdots$ and $r<3$, then there are only at most two moves, so there are only at most two Grundy values among its moves, so $\mcG(n,r)<3$. This is also true if $n=3+z_2(n)+\cdots$ and $r<3$. Now, if $n=1+3+z_3(n)+\cdots$ and $r\ge z_3(n)$, then we can remove $z_3(n)$ tokens to obtain $(n-z_3(n),2z_3(n))$, which has Grundy value 3 by induction. Similarly, if $n=3+z_2(n)+\cdots$ and $r\ge z_2(n)-1$, then we can remove $z_2(n)-1$ tokens to reach $(n-z_2(n)+1,2z_2(n)-2)$, which has value 3 by induction. Hence, these positions do not have Grundy value 3.

Now, suppose $n=1+z_2+\cdots$, where $z_2\ge 5$. If $r<z_2$, then $\mcG(n,r)\le 1$. If $r\ge z_2$, then there is a move to $(n-z_2+2,2z_2-4)$, which has Grundy value 3 by induction, so $\mcG(n,r)\neq 3$. Now suppose $n=2+z_2+\cdots$. If $r<z_2$, then $\mcG(n,r)\le 2$. If $r\ge z_2$, then there is a move to $(n-z_2+1,2z_2-2)$, which has Grundy value 3 by induction, so $\mcG(n,r)\neq 3$. Finally, suppose $z_1(n)\ge 5$. If $r<z_1(n)$, then $\mcG(n,r)=0$. If $r\ge z_1(n)$, then there is a move to $(n-z_1(n)+3,2z_1(n)-3)$, which has Grundy value 3. Hence $\mcG(n,r)\neq 3$. This completes the proof. \end{proof}

It appears to be more difficult to classify the positions of Grundy value $k$ for $k\ge 4$. Thus we turn to the problem of understanding the Grundy values of the initial positions $(n,n-1)$ (and $(n,n)$) and their growth.

\section{Values of starting positions} \label{sec:startingvals}

In this section we prove the following result.

\begin{thm}\label{thm:4} 
Ignoring the Fibonacci numbers, the Grundy values $\mcG(n,n-1)$ of the starting positions are non-decreasing. Furthermore, when they increase, they increase by one. 
\end{thm}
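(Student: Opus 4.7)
The plan is to proceed by induction on $n$, establishing the following claim: for any non-Fibonacci $n$ and $n'$ the next non-Fibonacci integer after $n$ (so $n' \in \{n+1, n+2\}$), we have $g(n) \le g(n') \le g(n)+1$, writing $g(m) := \mcG(m, m-1)$. The case $n' = n+1$ is the main one; the case $n' = n+2$ (when $n+1 = F_t$ is Fibonacci) is handled by the same methods applied to the shifted option set $S(n+2, n+1)$. Both bounds rely on comparing
\[
S(n, n-1) = \{\mcG(n-k, 2k) : 1 \le k \le n-1\}, \qquad S(n+1, n) = \{\mcG(n+1-k, 2k) : 1 \le k \le n\},
\]
and I would first prove the monotonicity lemma: $\mcG(m, r)$ is non-decreasing in $r$, since enlarging $r$ only adds options to the mex computation and $\mex$ is monotone under set inclusion; in particular, for non-Fibonacci $m$, $\mcG(m, r) \le \mcG(m, m) = g(m)$ for all $r$ (the equality follows because removing all $m$ tokens leads to a 0-position already in $S(m, m-1)$, as $z_1(m) \le m-1$).

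For the upper bound $g(n+1) \le g(n)+1$, I would show that every option $\mcG(n+1-k, 2k)$ from $(n+1, n)$ is at most $g(n)$, forcing $\mex S(n+1,n) \le g(n)+1$. If $n+1-k$ is non-Fibonacci, the monotonicity lemma and induction give $\mcG(n+1-k, 2k) \le g(n+1-k) \le g(n)$. If $n+1-k = F_t$, then Theorem~\ref{thm:zeroes} handles $2k < F_t$ (giving $\mcG = 0$), and for $2k \ge F_t$ the position reduces to $(F_t, F_t)$; a separate inductive argument on the row of $F_t$ shows $\mcG(F_t, F_t) \le g(F_t+1) \le g(n)$, using $F_t + 1 \le n$ because $n$ is non-Fibonacci and $F_t \le n$.

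For the non-decrease $g(n+1) \ge g(n)$, I would show that every value $v < g(n)$ appears in $S(n+1, n)$. Given $\mcG(n-k, 2k) = v$ from $S(n, n-1)$, the natural candidate in $S(n+1, n)$ is to remove $k+1$ tokens, reaching $(n-k, 2k+2)$, whose Grundy value is at least $v$ by monotonicity. The main obstacle is the \emph{column-jump} case, where this Grundy value strictly exceeds $v$ because the two extra options (removing $2k+1$ or $2k+2$ tokens from $n-k$) introduce the missing $v$ into the mex and push it up; this occurs when $n-k$ is near a Fibonacci threshold. The remedy is to exhibit an alternative move realizing $v$ in $S(n+1, n)$: for $v = 0$ the choice $k' = z_1(n+1)$ works by Theorem~\ref{thm:zeroes}, and for small $v \in \{1, 2, 3\}$ Theorems~\ref{thm:ones}--\ref{thm:threes} describe such moves explicitly. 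Handling general $v$ will require a Zeckendorf-based case analysis invoking Lemma~\ref{lem:smallfibs} in the spirit of those earlier proofs, and I expect this column-jump recovery to be the most delicate part of the argument.
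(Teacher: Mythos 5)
Your setup (monotonicity of $\mcG(m,r)$ in $r$ via monotonicity of $\mex$ under set inclusion) and your upper-bound argument are sound and essentially match the paper's, though the paper avoids your Fibonacci case-splitting entirely by working with $\mcG(n,n)$ for \emph{all} $n$ and only passing to $(n,n-1)$ at the end. The genuine gap is in the lower bound $\mcG(n+1)\ge\mcG(n)$. You correctly identify the column-jump obstruction --- the lifted move from $n+1$ lands on $(n-k,2k+2)$, whose value can strictly exceed $v$ (e.g.\ $\mcG(11,6)=3$ but $\mcG(11,7)=5$ in Table~\ref{table:gvals}) --- but your proposed remedy, an explicit Zeckendorf description of the positions of each value $v$ in the spirit of Theorems~\ref{thm:ones}--\ref{thm:threes}, is only available for $v\le 3$; the paper explicitly states that such a classification appears difficult for $v\ge 4$, so your argument does not close for general $v$.

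The paper's fix requires no Zeckendorf analysis at all: strengthen the induction hypothesis. Let $h(d)$ be the least heap size at which value $d$ occurs and let $A_d=\{(m,r): m<h(d+1),\ \mcG(m,r)=d\}$ be the ``first block'' of value-$d$ positions. Since $\mex$ is monotone under inclusion and no position with heap size below $h(d+1)$ can have value exceeding $d$, the set $A_d$ is upward closed in $r$. The strengthened hypothesis is that from $(n,n)$ there is, for each $d<\mcG(n,n)$, a move to a position \emph{in $A_d$}; lifting that move to $n+1$ by removing one more token preserves the heap size and increases $r$ by $2$, hence lands in $A_d$ again with value exactly $d$. This is precisely the device that kills the column jump, and without it (or something equivalent) your proof of the non-decreasing half is incomplete.
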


Consider positions of the form $(n,n)$. A starting position is of the form $(n, n-1)$. Unless $n$ is a Fibonacci number, it is clear that $\mcG(n,n) = \mcG(n,n-1)$, since the only additional move is to $(0,0)$, which has Grundy value 0. Recall that we sometimes denote a position of the form $(n,n)$ simply by $n$. It is clear that Theorem~\ref{thm:4} follows from the theorem below, which we prove instead.

\begin{thm}\label{thm:increasing} For all $n\ge 0$, $\mcG(n)\le\mcG(n+1)\le\mcG(n)+1$. \end{thm}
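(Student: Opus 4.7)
The plan is to prove both inequalities by strong induction on $n$. Writing $S(m) := \{\mcG(m-k, 2k) : 1 \le k \le m\}$ so that $\mcG(m) = \mex(S(m))$, the inductive hypothesis (that the statement holds for all smaller indices) implies that $\mcG(0), \mcG(1), \ldots, \mcG(n)$ is non-decreasing with step size $0$ or $1$, and hence takes every value in $\{0, 1, \ldots, \mcG(n)\}$.

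For the upper bound $\mcG(n+1) \le \mcG(n)+1$, I would use that $\mcG(m, r)$ is non-decreasing in $r$ and stabilizes at $\mcG(m, m) = \mcG(m)$ once $r \ge m$, since adding more moves can only weakly increase the mex of the resulting set. Therefore for every $k \in \{1, \ldots, n+1\}$,
\[
\mcG(n+1-k, 2k) \le \mcG(n+1-k, n+1-k) = \mcG(n+1-k) \le \mcG(n),
\]
the last inequality by the induction hypothesis since $n+1-k \le n$. Hence $S(n+1) \subseteq \{0, 1, \ldots, \mcG(n)\}$, which forces $\mcG(n+1) = \mex(S(n+1)) \le \mcG(n)+1$.

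For the lower bound $\mcG(n+1) \ge \mcG(n)$, I would show $\{0, 1, \ldots, \mcG(n)-1\} \subseteq S(n+1)$. Fix $g \le \mcG(n)-1$. By the inductive hypothesis there is a smallest index $m_g \le n$ with $\mcG(m_g) = g$. The natural candidate move from $(n+1, n+1)$ is to remove $n+1-m_g$ tokens, reaching $(m_g, 2(n+1-m_g))$; provided $2(n+1-m_g) \ge m_g$, i.e. $m_g \le 2(n+1)/3$, this lands in the saturated regime and has Grundy value $\mcG(m_g, m_g) = \mcG(m_g) = g$, as required.

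The main obstacle is the leftover regime $m_g > 2(n+1)/3$, where this candidate move lands on a constrained position $(m_g, r)$ with $r < m_g$ whose Grundy value may fall below $\mcG(m_g)$. Handling this requires either producing an alternative move from $(n+1, n+1)$ to some $(m, r)$ with $\mcG(m, r) = g$, or proving a complementary structural lemma identifying enough positions $(m, r)$ with $r < m$ whose Grundy value nevertheless equals $\mcG(m)$. I would approach this via a careful Zeckendorf-based case analysis using Lemma~\ref{lem:smallfibs} and the structural inputs from Theorems~\ref{thm:zeroes}--\ref{thm:threes}, and this is where I expect the bulk of the technical effort to lie.
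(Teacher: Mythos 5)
Your upper bound argument is correct and matches the paper's: every move from $(n+1,n+1)$ lands on a heap of size $m\le n$, and $\mcG(m,k)\le\mcG(m,m)=\mcG(m)\le\mcG(n)$ by monotonicity of the mex under adding options together with the inductive hypothesis, so $\mcG(n+1)\le\mcG(n)+1$.

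The lower bound, however, is genuinely incomplete, and you have correctly located the hole: aiming at the saturated position $(m_g,2(n+1-m_g))$ with $m_g$ the first occurrence of value $g$ only works when $m_g\le 2(n+1)/3$, and nothing available at this stage guarantees that (it would essentially require $h(g+1)\ge\tfrac{3}{2}h(g)$, which the paper only conjectures). The paper's fix is not a Zeckendorf case analysis but a strengthening of the induction hypothesis. Let $h(g)$ be the least heap size at which value $g$ occurs, and let $A_g$ be the ``first block'' of positions $(m,k)$ with $\mcG(m,k)=g$ and $m<h(g+1)$. The key observation is that $A_g$ is upward closed in the second coordinate: if $(m,k)\in A_g$ and $k'>k$ then $\mcG(m,k')\ge g$ by monotonicity, and it cannot exceed $g$ since that would force a position of value $g+1$ on a heap of size less than $h(g+1)$. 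The statement carried through the induction is: if $\mcG(n)=g$, then for each $d<g$ there is a move from $(n,n)$ to some $(m_d,k_d)\in A_d$. Given such a witness, removing one extra token from $n+1$ reaches $(m_d,k_d+2)$, which is still in $A_d$ and hence still has value exactly $d$; this yields $\mcG(n+1)\ge g$ without ever needing to land on a saturated position. The inductive hypothesis also shows that any value-$g$ position reachable from $n+1$ lies in $A_g$, which lets the strengthened statement propagate. Without some device of this kind, your lower bound does not close.
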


Before we begin the proof, we introduce some notation. For each $g\ge 0$, let $h(g)$ be the smallest value of $n$ for which there is some $r$ with $\mcG(n,r)=g$. It is clear that we could equivalently let $h(g)$ be the smallest value of $n$ for which $\mcG(n,n)=g$. For $g\ge 0$, let $A_g$ denote the set of pairs $(n,r)$ with $n<h(g+1)$ and for which $\mcG(n,r)=g$. We think of $A_g$ as being the ``first block'' of positions $(n,r)$ for which $\mcG(n,r)=g$. A key property of $A_g$ is that if $(n,r)\in A_g$ and $r'>r$, then $(n,r')\in A_g$ as well.

\begin{proof}
We prove the theorem by induction on $n$, together with the following statement: if $\mcG(n)=g$, then for each $d<g$, there is some move from $n$ to $(m_d,k_d)$ with $(m_d,k_d)\in A_d$. For $n=0$, both of these statements are clear. Now, suppose they both hold for $n$; we show that they also hold for $n+1$. Suppose $\mcG(n)=g$. Then, for each $d<g$, there is a move from $n$ to $(m_d,k_d)\in A_d$. Thus there is a move from $n+1$ to $(m_d,k_d+2)$. Since $(m_d,k_d+2)\in A_d$, we have a move from $n+1$ to a position in $A_d$. Hence, there are moves from $n+1$ to positions of Grundy value $d$ for all $d<g$, so $\mcG(n+1)\ge g$. The inductive hypothesis shows that $h(g+1)\ge n+1$, so any position $(m,k)$ with $m<n+1$ and $\mcG(m,k)=g$ must be in $A_g$; furthermore, $h(g+2)>n+1$. Thus, if there is a move from $n+1$ to a position $(m,k)$ with Grundy value $g$, then $(m,k)\in A_g$. This completes the proof.
\end{proof}

\begin{thm}\label{thm:4.1} We have $\log_{3/2}(n)\le\mcG(n)\le \lceil2\sqrt{n}\rceil+1$.
\end{thm}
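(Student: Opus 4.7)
The plan is to prove each inequality by induction, drawing on Theorem~\ref{thm:increasing} and the quantity $h(g)$ defined earlier. For the lower bound, I would begin by noting that from $(n,n)$, any move removing $k \ge \lceil n/3 \rceil$ tokens reaches $(n-k, 2k)$ with $2k \ge n-k$; since only $n-k$ tokens remain, this position is effectively $(n-k, n-k)$, so its Grundy value is $\mcG(n-k)$. As $k$ ranges over $[\lceil n/3 \rceil, n]$, the first coordinate $n-k$ covers $\{0, 1, \ldots, \lfloor 2n/3 \rfloor\}$, and by Theorem~\ref{thm:increasing}, $\mcG$ realizes every integer value from $0$ to $\mcG(\lfloor 2n/3 \rfloor)$ on that range. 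Hence the set of Grundy values reachable from $(n,n)$ contains $\{0, 1, \ldots, \mcG(\lfloor 2n/3 \rfloor)\}$, so
\[\mcG(n) \ge \mcG(\lfloor 2n/3 \rfloor) + 1.\]
Iterating this recursion, which shrinks $n$ by a factor of roughly $3/2$ at each step, yields a lower bound of order $\log_{3/2}(n)$.

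For the upper bound, I would work with $h(g)$ and aim to establish the recursion
\[h(g) \ge h(g-1) + \left\lceil \frac{g-1}{2} \right\rceil.\]
To do so, fix a move from $(h(g), h(g))$ that witnesses Grundy value $g-1$, say to $(h(g) - k, 2k)$ with $\mcG(h(g) - k, 2k) = g-1$. The definition of $h(g-1)$ forces $h(g) - k \ge h(g-1)$. Also, since a position of Grundy value $g-1$ must admit moves to each of the values $0, 1, \ldots, g-2$, it has at least $g-1$ available moves, so $\min(h(g) - k, 2k) \ge g - 1$. A short induction (with easy base cases) shows $h(g-1) \ge g-1$, so the binding constraint on $k$ is $2k \ge g-1$, i.e., $k \ge \lceil (g-1)/2 \rceil$, and adding yields the recursion. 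Summing from $g = 1$ upward then produces $h(g) \ge \lfloor g^2/4 \rfloor$, and $\mcG(n) = g$ forces $n \ge h(g)$, giving $g \le 2\sqrt{n+1} \le \lceil 2\sqrt{n} \rceil + 1$.

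The main obstacle will be the case analysis in the upper bound. If the witnessing move actually belongs to the ``big-$k$'' branch ($k \ge \lceil n/3 \rceil$, in which the move behaves as $\mcG(n-k)$), then one gets the stronger bound $h(g) \ge \lceil 3 h(g-1)/2 \rceil$, which for $h(g-1) \ge g-1$ dominates the linear-increment version and so is consistent with the induction; nevertheless the linear-increment bound is what must be invoked in the ``small-$k$'' worst case, and one has to argue that the two cases together really force $h(g) \ge h(g-1) + \lceil (g-1)/2\rceil$. A small amount of floor/ceiling bookkeeping is also needed in the lower-bound iteration to match the stated $\log_{3/2}$ form.
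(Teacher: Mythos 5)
Your argument is essentially the paper's own proof in both halves: the lower bound uses Theorem~\ref{thm:increasing} together with the observation that a position sees all heap sizes up to about two thirds of itself (the paper phrases this as $\mcG(\lceil 3n/2\rceil)\ge\mcG(n)+1$ rather than $\mcG(n)\ge\mcG(\lfloor 2n/3\rfloor)+1$, but it is the same step), and the upper bound derives the same recursion $h(g+1)-h(g)\ge g/2$ from the facts that a value-$g$ position needs $n_1\ge h(g)$ and at least $g$ available moves, then sums to get $h(g)\gtrsim g(g-1)/4$. The case analysis you worry about in your final paragraph is not actually needed --- the two constraints $h(g)-k\ge h(g-1)$ and $2k\ge g-1$ hold simultaneously for any witnessing move, so they add directly.
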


\begin{proof} We first prove the lower bound. For $n>0$, let $n'=\lceil\frac{3n}{2}\rceil$. We show that $\mcG(n')\ge\mcG(n)+1$, which implies the lower bound $\log_{3/2}(n)\le\mcG(n)$. By Theorem~\ref{thm:increasing}, we have $\mcG(r)\le\mcG(r+1)\le\mcG(r)+1$ for all $r$. From $n'$, there is a move to $n$, and hence to $r$ for each $r\le n$. Thus, the moves from $n'$ include moves to $0,1,2,\ldots,n$, and $\{\mcG(0),\mcG(1),\ldots,\mcG(n)\}=\{0,1,\ldots,\mcG(n)\}$. Hence $\mcG(n')\ge\mcG(n)+1$.

To prove the upper bound, we let $j(g)$ be the least value of $r$ for which there is some $n$ with $\mcG(n,r)=g$. In order for $\mcG(n,r)$ to be equal to $g$, there must be at least $g$ moves from $(n,r)$, since there must be moves to positions of value $0,1,2,\ldots,g-1$. Hence, $j(g)\ge g$. Now, assuming we have computed $h(g)$, we give a lower bound for $h(g+1)$. In order for $\mcG(n,r)$ to be equal to $g+1$, there must be a move to a position $(n_1,r_1)$ whose Grundy value is $g$. Hence, we need $n_1\ge h(g)$ and $r_1\ge j(g)\ge g$. Since $r_1=2(n-n_1)$, we obtain $2(n-n_1)\ge g$, or $n\ge \frac{g}{2}+n_1\ge \frac{g}{2}+h(g)$, so $h(g+1)-h(g)\ge\frac{g}{2}$. Since $h(1)=1$, we have \[h(g)-1=\sum_{i=1}^{g-1} (h(i+1)-h(i))\ge \sum_{i=1}^{g-1} \frac{i}{2}=\frac{g(g-1)}{4},\] so $h(g)\ge\frac{g(g-1)}{4}$. Thus, \[h(\lceil 2\sqrt{n}\rceil+1)\ge\frac{(2\sqrt{n}+1)2\sqrt{n}}{4}+1>n,\] so $\mcG(n)\le\lceil 2\sqrt{n}\rceil+1$, as desired. \end{proof}

In fact, it appears that the lower bound is a lot closer to the truth than is the upper bound. More precisely, we conjecture based on numerical evidence that $\mcG(n)+1\le \mcG(\lceil\frac{3n}{2}\rceil) \le \mcG(n)+2$, which would imply that the growth rate is logarithmic.

\bibliographystyle{alpha}
\bibliography{fibnim}

\end{document}